\newtheorem{theorem}[equation]{Theorem}
\newtheorem{proposition}[equation]{Proposition}
\numberwithin{equation}{section}
\begin{document}

\title{Exponential sums and finite field $A$-hypergeometric functions}
\author{Alan Adolphson}
\address{Department of Mathematics\\
Oklahoma State University\\
Stillwater, Oklahoma 74078}
\email{adolphs@math.okstate.edu}
\date{\today}
\keywords{}
\subjclass{}
\begin{abstract}
We define finite field $A$-hypergeometric functions and show that they are Fourier expansions of families of exponential sums on the torus.  For an appropriate choice of $A$, our finite field $A$-hypergeometric function can be specialized to the finite field ${}_kF_{k-1}$-hypergeometric function defined by McCarthy.  
\end{abstract}
\maketitle


\section{Introduction}

Let  ${\mathbb F}_q$ be the finite field of $q$ elements, $q=p^a$, $p$ a prime.  Fix a nontrivial additive character $\Psi:{\mathbb F}_q\to{\mathbb C}$.  For a multiplicative character $\chi:{\mathbb F}_q^\times\to{\mathbb C}^\times$, denote by $g(\chi)$ the Gauss sum
\[ g(\chi) = \sum_{x\in{\mathbb F}_q^\times} \chi(x)\Psi(x)\in{\mathbb C}. \]
Let $\widehat{{\mathbb F}_q^\times}$ denote the group of multiplicative characters of ${\mathbb F}_q^\times$.  For $\chi\in\widehat{{\mathbb F}_q^\times}$ and ${\bf b}=(b_1,\dots,b_n)\in({\mathbb Z}/(q-1){\mathbb Z})^n$, let $\chi^{\bf b} = (\chi^{b_1},\dots,\chi^{b_n})\in(\widehat{{\mathbb F}_q^\times})^n$.

Fix a set $A=\{{\bf a}_1,\dots,{\bf a}_N\}\subseteq({\mathbb Z}/(q-1){\mathbb Z})^n$ and a parameter $\beta=(\beta_1,\dots,\beta_n)\in (\widehat{{\mathbb F}_q^\times})^n$.  Put 
\[ L_\beta = \bigg\{\chi = (\chi_1,\dots,\chi_N)\in (\widehat{{\mathbb F}_q^\times})^N\mid \prod_{i=1}^N \chi_i^{{\bf a}_i} = \beta\bigg\}. \]
Let $\lambda = (\lambda_1,\dots,\lambda_N)\in({\mathbb F}_q^\times)^N$.  Gel'fand, Graev, and Zelevinski\u{i}\cite[Proposition~1]{GGZ} give a formula for a formal series solution of an $A$-hypergeometric system.  By analogy with that formula, we define the associated finite field $A$-hypergeometric function $F_A(\beta;-):({\mathbb F}_q^\times)^N\to{\mathbb C}$ by the formula
\begin{equation}
F_A(\beta;\lambda) = (q-1)^{n-N}\sum_{\chi\in L_\beta} g(\bar{\chi}_1)\cdots g(\bar{\chi}_N)\chi_1(\lambda_1)\cdots\chi_N(\lambda_N).
\end{equation}
The factor $(q-1)^{n-N}$ is introduced to simplify the statement of Theorem 1.3 below.

The Gauss sums $g(\chi)$ depend on the choice of additive character~$\Psi$, hence so does $F_A(\beta;\lambda)$.  If $\Psi'$ is another nontrivial additive character, there exists $c\in{\mathbb F}_q^\times$ such that $\Psi'(x)=\Psi(cx)$ for all $x\in{\mathbb F}_q$. If we define $g'(\chi) = \sum_{x\in{\mathbb F}_q^\times}\chi(x)\Psi'(x)$, then the change of variable $x\mapsto c^{-1}x$ shows that $g'(\chi) = \bar{\chi}(c)g(\chi)$.  If we define $F'_A(\beta;\lambda)$ by replacing $g(\bar{\chi}_i)$ by $g'(\bar{\chi}_i)$ on the right-hand side of Eq.~(1.1), it follows that $F'_A(\beta;\lambda) = F_A(\beta; c\lambda)$.
In some cases one can normalize $F_A(\beta;\lambda)$ so that the normalized function is independent of the choice of additive character (see Section~3).

There is also a family of exponential sums associated to $A$ and $\beta$.  For ${\bf a}=(a_1,\dots,a_n)\in({\mathbb Z}/(q-1){\mathbb Z})^n$, write $x^{\bf a} = x_1^{a_1}\cdots x_n^{a_n}$.  For $\lambda = (\lambda_1,\dots,\lambda_N)\in({\mathbb F}_q^\times)^N$ define
\begin{equation}
S_A(\beta,\lambda) = \sum_{x=(x_1,\dots,x_n)\in({\mathbb F}_q^\times)^n} \bar{\beta}_1(x_1)\cdots \bar{\beta}_n(x_n)\Psi\bigg(\sum_{i=1}^N \lambda_ix^{{\bf a}_i}\bigg).
\end{equation}
The main result of this note is the following statement.
\begin{theorem}
For all $A$, $\beta$, and $\lambda$ as above, $S_A(\beta,\lambda) = F_A(\beta;\lambda)$. 
\end{theorem}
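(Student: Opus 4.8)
The plan is to start from the exponential sum $S_A(\beta,\lambda)$ and expand it by finite Fourier analysis until it collapses, via orthogonality, onto exactly those character tuples lying in $L_\beta$. The one ingredient I need is the Fourier expansion of the additive character in terms of Gauss sums: for $t\in{\mathbb F}_q^\times$,
\[ \Psi(t) = \frac{1}{q-1}\sum_{\chi\in\widehat{{\mathbb F}_q^\times}} g(\bar\chi)\chi(t), \]
which follows from the orthogonality of the multiplicative characters on ${\mathbb F}_q^\times$ together with the definition $g(\bar\chi)=\sum_{t}\bar\chi(t)\Psi(t)$.

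First I would use the additivity of $\Psi$ to factor $\Psi(\sum_{i}\lambda_i x^{{\bf a}_i}) = \prod_{i=1}^N\Psi(\lambda_i x^{{\bf a}_i})$, observing that each argument $\lambda_i x^{{\bf a}_i}$ lies in ${\mathbb F}_q^\times$ because $\lambda_i\in{\mathbb F}_q^\times$ and $x\in({\mathbb F}_q^\times)^n$; this is what legitimizes applying the inversion formula to every factor. Introducing an independent character $\chi_i$ for each factor, and writing ${\bf a}_i=(a_{i1},\dots,a_{in})$ so that $\chi_i(x^{{\bf a}_i})=\prod_{j}\chi_i^{a_{ij}}(x_j)$, I obtain
\[ S_A(\beta,\lambda) = \frac{1}{(q-1)^N}\sum_{\chi_1,\dots,\chi_N}\prod_{i=1}^N g(\bar\chi_i)\chi_i(\lambda_i)\sum_{x\in({\mathbb F}_q^\times)^n}\prod_{j=1}^n\bar\beta_j(x_j)\prod_{i=1}^N\chi_i^{a_{ij}}(x_j). \]

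Next I would interchange the (finite) sums and factor the sum over $x$ as a product over the coordinates $x_j$. The $j$-th coordinate sum is $\sum_{x_j\in{\mathbb F}_q^\times}\bigl(\bar\beta_j\prod_{i}\chi_i^{a_{ij}}\bigr)(x_j)$, which by orthogonality equals $q-1$ if $\prod_{i=1}^N\chi_i^{a_{ij}}=\beta_j$ and $0$ otherwise. Hence the product over $j$ contributes $(q-1)^n$ exactly when $\prod_{i=1}^N\chi_i^{a_{ij}}=\beta_j$ holds for every $j$, that is, precisely when the componentwise product satisfies $\prod_{i=1}^N\chi_i^{{\bf a}_i}=\beta$, which is the defining condition for $\chi=(\chi_1,\dots,\chi_N)\in L_\beta$. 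Restricting the character sum accordingly and collecting the surviving constant $(q-1)^{n}/(q-1)^{N}=(q-1)^{n-N}$ yields exactly the defining formula~(1.1) for $F_A(\beta;\lambda)$.

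I expect no serious obstacle here: the argument is a finite manipulation with no convergence or interchange issues, since every sum in sight is finite. The only points that demand care are the bookkeeping, namely correctly recording that matching the componentwise product $\prod_{i}\chi_i^{{\bf a}_i}$ to $\beta$ is equivalent to the $n$ scalar orthogonality constraints produced after the coordinate factorization, and tracking the constant $(q-1)^{n-N}$, which is exactly the reason the normalizing factor was built into the definition of $F_A$. A minor preliminary check is the Gauss-sum inversion formula itself, but this is immediate from the orthogonality relations.
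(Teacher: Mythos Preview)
Your proof is correct and is essentially the same Fourier-analytic argument as the paper's: both rest on the orthogonality of multiplicative characters on $({\mathbb F}_q^\times)^n$ and the relation between $\Psi$ and Gauss sums. The only cosmetic difference is that the paper computes the Fourier coefficients of $S_A(\beta,\cdot)$ in the $\lambda$-variable via the inner product and then separates variables by the substitution $\lambda_j\mapsto\lambda_j/x^{{\bf a}_j}$, whereas you insert the Gauss-sum inversion of $\Psi$ directly and then sum over~$x$; the two manipulations are dual and produce the same intermediate expression.
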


The $p$-adic version of the equality of Theorem 1.3, where all multiplicative characters are expressed as powers of the Teichm\"uller character, is similar to \cite[Eq.~(6.2)]{AS} and can be derived from it.  The results of \cite{AS} can be used to relate $F_A(\beta;\lambda)$ to mod~$p$ solutions of $A$-hypergeometric systems over ${\mathbb Q}$ and to truncated hypergeometric series.  We give a simple direct proof of Theorem~1.3 in Section~2.  In Section~3 we relate our definition to that given recently by McCarthy\cite{Mc}.

\section{Proof of Theorem 1.3}

Let ${\mathcal F}$ be the set of all ${\mathbb C}$-valued functions on $({\mathbb F}_q^\times)^N$.  Then ${\mathcal F}$ is a ${\mathbb C}$-vector space of dimension $(q-1)^N$.  We denote the trivial character on ${\mathbb F}_q^\times$ by $\varepsilon$.  It is easy to check that for $\chi=(\chi_1,\dots,\chi_N)\in(\widehat{{\mathbb F}_q^\times})^N$, 
\begin{equation}
\sum_{\lambda\in({\mathbb F}_q^\times)^N} \chi(\lambda) = \begin{cases} 0 & \text{if $\chi_i\neq\varepsilon$ for some $i$,} \\ (q-1)^N & \text{if $\chi_i=\varepsilon$ for all $i$.} \end{cases}
\end{equation}
This relation implies that the set $(\widehat{{\mathbb F}_q^\times})^N$ is an orthogonal basis for ${\mathcal F}$ relative to the inner product $(f,g) = \sum_{\lambda\in({\mathbb F}_q^\times)^N} f(\lambda)\overline{g(\lambda)}$.  We prove Theorem 1.3 by showing that $F_A(\beta;\lambda)$ is the Fourier expansion of $S_A(\beta,\lambda)$ relative to this orthogonal basis.  
Write
\begin{equation}
S_A(\beta,\lambda) = \sum_{\chi\in(\widehat{{\mathbb F}_q^\times})^N} c_\chi\chi(\lambda),
\end{equation}
where $c_\chi\in{\mathbb C}$.  Theorem~1.3 is an immediate consequence of the following result and the definition of $F_A(\beta;\lambda)$.

\begin{proposition}
With notation as above,
\[ c_\chi = \begin{cases} (q-1)^{n-N}g(\bar{\chi}_1)\cdots g(\bar{\chi}_N) & \text{if $\prod_{i=1}^N \chi_i^{{\bf a}_i} = \beta$,} \\ 0 & \text{if $\prod_{i=1}^N \chi_i^{{\bf a}_i} \neq \beta$.} \end{cases} \]
\end{proposition}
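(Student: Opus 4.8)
The plan is to extract the Fourier coefficients $c_\chi$ directly from the orthogonality relation~(2.1). Because $(\widehat{{\mathbb F}_q^\times})^N$ is an orthogonal basis for ${\mathcal F}$ with $(\chi,\chi)=(q-1)^N$, taking the inner product of~(2.2) with $\chi$ gives
\[ c_\chi = \frac{1}{(q-1)^N}\sum_{\lambda\in({\mathbb F}_q^\times)^N} S_A(\beta,\lambda)\,\overline{\chi(\lambda)}. \]
First I would insert the definition~(1.2) of $S_A(\beta,\lambda)$, write $\overline{\chi(\lambda)}=\prod_{i=1}^N\bar{\chi}_i(\lambda_i)$, and interchange the two finite sums so that the sum over $x\in({\mathbb F}_q^\times)^n$ is on the outside. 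The summand then splits as a product over $i$ of factors $\bar{\chi}_i(\lambda_i)\Psi(\lambda_ix^{{\bf a}_i})$, so the inner sum over $\lambda$ factors completely into a product of $N$ one-variable sums.

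The key computation is the evaluation of each one-variable sum. For fixed $x$, since $x^{{\bf a}_i}\in{\mathbb F}_q^\times$, the substitution $\lambda_i\mapsto\lambda_i(x^{{\bf a}_i})^{-1}$ is a bijection of ${\mathbb F}_q^\times$ and yields
\[ \sum_{\lambda_i\in{\mathbb F}_q^\times}\bar{\chi}_i(\lambda_i)\Psi(\lambda_ix^{{\bf a}_i}) = \chi_i(x^{{\bf a}_i})\,g(\bar{\chi}_i). \]
Multiplying these over $i$ pulls out the factor $g(\bar{\chi}_1)\cdots g(\bar{\chi}_N)$, and I would record the residual character of $x$ using the identity $\chi_i(x^{{\bf a}_i})=\prod_{j=1}^n\chi_i^{(a_i)_j}(x_j)$, where ${\bf a}_i=((a_i)_1,\dots,(a_i)_n)$. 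Collecting contributions gives
\[ c_\chi = (q-1)^{-N}g(\bar{\chi}_1)\cdots g(\bar{\chi}_N)\sum_{x\in({\mathbb F}_q^\times)^n}\bar{\beta}_1(x_1)\cdots\bar{\beta}_n(x_n)\prod_{i=1}^N\chi_i(x^{{\bf a}_i}). \]

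Finally I would evaluate the remaining sum over $x$. Writing $\prod_{i=1}^N\chi_i^{{\bf a}_i}=(\gamma_1,\dots,\gamma_n)$, so that $\gamma_j=\prod_{i=1}^N\chi_i^{(a_i)_j}$, the identity above rearranges the product as $\prod_{i=1}^N\chi_i(x^{{\bf a}_i})=\prod_{j=1}^n\gamma_j(x_j)$. Hence the $x$-sum factors over $j$ into $\prod_{j=1}^n\sum_{x_j\in{\mathbb F}_q^\times}(\bar{\beta}_j\gamma_j)(x_j)$, and the one-variable case of~(2.1) shows each factor equals $q-1$ when $\gamma_j=\beta_j$ and $0$ otherwise. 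Thus the whole sum is $(q-1)^n$ precisely when $\gamma_j=\beta_j$ for every $j$, i.e.\ when $\prod_{i=1}^N\chi_i^{{\bf a}_i}=\beta$, and is $0$ otherwise; substituting gives the two cases of the Proposition. There is no serious obstacle here—the argument is a direct orthogonality computation—so the only thing requiring care is the multi-index bookkeeping, namely matching the componentwise product $\prod_i\chi_i^{{\bf a}_i}$ against $\beta$ and keeping the change of variable $\lambda_i\mapsto\lambda_i(x^{{\bf a}_i})^{-1}$ consistent across all $i$.
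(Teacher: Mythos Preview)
Your proposal is correct and follows essentially the same approach as the paper: extract $c_\chi$ via orthogonality, apply the change of variable $\lambda_i\mapsto\lambda_i(x^{{\bf a}_i})^{-1}$ to produce the product of Gauss sums $g(\bar{\chi}_i)$ together with the residual character $\prod_i\chi_i^{{\bf a}_i}(x)\bar{\beta}(x)$, and then evaluate the $x$-sum by the $n$-variable analogue of~(2.1). The only difference is presentational---you make the componentwise bookkeeping with the $\gamma_j$ explicit, whereas the paper leaves the factoring of the $x$-sum as a one-line analogue of~(2.1).
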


\begin{proof}
Fix $\rho=(\rho_1,\dots,\rho_N)\in(\widehat{{\mathbb F}_q^\times})^N$.  Multiplying both sides of (2.2) by $\bar{\rho}(\lambda)$, summing over $\lambda\in({\mathbb F}_q^\times)^N$, and using (2.1) and the fact that $\Psi$ is an additive character gives 
\begin{equation}
(q-1)^Nc_\rho = \sum_{\lambda\in({\mathbb F}_q^\times)^N}\sum_{x\in({\mathbb F}_q^\times)^n} \prod_{i=1}^n \bar{\beta}_i(x_i)\prod_{j=1}^N \bar{\rho}_j(\lambda_j)\Psi(\lambda_jx^{{\bf a}_j}).
\end{equation}
The change of variable $\lambda_j\to\lambda_j/x^{{\bf a}_j}$ gives
\[ \bar{\rho}_j(\lambda_j)\Psi(\lambda_jx^{{\bf a}_j}) \mapsto \rho_j^{{\bf a}_j}(x) \bar{\rho}_j(\lambda_j)\Psi(\lambda_j). \]
Substitution into (2.4) gives
\begin{align}
(q-1)^Nc_\rho &= \sum_{x\in({\mathbb F}_q^\times)^n} \bigg(\prod_{i=1}^N \rho_i^{{\bf a}_i}(x)\bigg) \bar{\beta}(x) \sum_{\lambda\in({\mathbb F}_q^\times)^N} \prod_{j=1}^N\bar{\rho}_j(\lambda_j)\Psi(\lambda_j) \\
 &= \prod_{j=1}^N g(\bar{\rho}_j) \sum_{x\in({\mathbb F}_q^\times)^n}\bigg(\prod_{i=1}^N \rho_i^{{\bf a}_i}(x)\bigg) \bar{\beta}(x). \nonumber
\end{align}
Analogous to (2.1) we have the relation
\begin{equation}
\sum_{x\in({\mathbb F}_q^\times)^n} \bigg(\prod_{i=1}^N \rho_i^{{\bf a}_i}(x)\bigg) \bar{\beta}(x) = \begin{cases} (q-1)^n & \text{if $\prod_{i=1}^N \rho_i^{{\bf a}_i} = \beta$,} \\ 0 & \text{if $\prod_{i=1}^N \rho_i^{{\bf a}_i} \neq \beta$.} \end{cases}
\end{equation}
Equations (2.5) and (2.6) imply the proposition.
\end{proof}

\section{Comparison with other definitions}

A finite field analogue of the classical ${}_kF_{k-1}$-hypergeometric function was first defined by Greene\cite{G}.  In recent work, McCarthy\cite{Mc} gives a somewhat different definition and compares his definition with that of Greene and with the definition of hypergeometric sum given by N. Katz\cite[Ch.~8.2]{K}.  In this section we show that for an appropriate choice of $A$, our finite field $A$-hypergeometric function specializes to that of McCarthy (up to a constant factor). 

We recall the definition of McCarthy\cite[Definition~1.4]{Mc}.  Let $\alpha_1,\dots,\alpha_{2k-1}\in\widehat{{\mathbb F}_q^\times}$ and let $t\in{\mathbb F}_q^\times$.  Define
\begin{multline}
{}_kF_{k-1}\bigg(\begin{matrix}\alpha_1 & \alpha_2 & \dots & \alpha_k \\ & \alpha_{k+1} & \dots & \alpha_{2k-1} \end{matrix}\; \bigg|\; t\bigg) = \\ 
\frac{1}{q-1} \sum_{\chi\in\widehat{{\mathbb F}_q^\times}}\bigg(\prod_{i=1}^k \frac{g(\alpha_i\chi)}{g(\alpha_i)} \prod_{j=k+1}^{2k-1}\frac{g(\bar{\alpha_j}\bar{\chi})}{g(\bar{\alpha}_j)}\bigg) g(\bar{\chi})\chi((-1)^k t).
\end{multline}
The Gauss sums $g(\alpha_i)$ and $g(\bar{\alpha}_j)$ in the denominators of the products make this expression independent of the choice of additive character $\Psi$.

Given a classical hypergeometric series, Dwork-Loeser\cite[Appendix]{DL} gives a choice of $A$ and $\beta$ for which the associated $A$-hypergeometric series specializes to the given hypergeometric series.  Following their suggestion, we take $n=2k-1$ and $A=\{{\bf a}_1,\dots,{\bf a}_{2k}\}$, where ${\bf a}_1,\dots,{\bf a}_{2k-1}$ are the standard unit basis vectors and
${\bf a}_{2k} = (1,\dots,1,-1,\dots,-1)$ ($1$ repeated $k$ times followed by $-1$ repeated $k-1$ times).  We take $\beta = (\bar{\alpha}_1,\dots,\bar{\alpha}_k,\alpha_{k+1},\dots,\alpha_{2k-1})$.  It is then straightforward to check that
\[ L_\beta = \{(\bar{\alpha}_1\bar{\chi},\dots,\bar{\alpha}_k\bar{\chi},\alpha_{k+1}\chi,\dots,\alpha_{2k-1}\chi,\chi)\mid \chi\in\widehat{{\mathbb F}_q^\times}\}. \]
Equation (1.1) then becomes
\begin{multline}
F_A(\beta;\lambda) = \frac{1}{q-1}\sum_{\chi\in\widehat{{\mathbb F}_q^\times}} \bigg(\prod_{i=1}^k g(\alpha_i\chi) \prod_{j=k+1}^{2k-1} g(\bar{\alpha}_j\bar{\chi})\bigg)g(\bar{\chi}) \\
\cdot\bigg(\prod_{i=1}^k \bar{\alpha}_i(\lambda_i)\bar{\chi}(\lambda_i) \prod_{j=k+1}^{2k-1} \alpha_j(\lambda_j)\chi(\lambda_j)\bigg) \chi(\lambda_{2k}).
\end{multline}
Making the specialization $\lambda_i\mapsto 1$ for $i=1,\dots,2k-1$ and $\lambda_{2k}\mapsto (-1)^kt$ gives
\begin{multline}
F_A(\beta;1,\dots,1,(-1)^kt) =  \\
\frac{1}{q-1}\sum_{\chi\in\widehat{{\mathbb F}_q^\times}} \bigg(\prod_{i=1}^k g(\alpha_i\chi) \prod_{j=k+1}^{2k-1} g(\bar{\alpha}_j\bar{\chi})\bigg)g(\bar{\chi})\chi((-1)^kt).
\end{multline}
Put $C = (\prod_{i=1}^kg(\alpha_i)\prod_{j=k+1}^{2k-1} g(\bar{\alpha}_j))$.  Comparing (3.3) with (3.1) shows that
\begin{equation}
C^{-1}F_A(\beta;1,\dots,1,(-1)^kt) = {}_kF_{k-1}\bigg(\begin{matrix}\alpha_1 & \alpha_2 & \dots & \alpha_k \\ & \alpha_{k+1} & \dots & \alpha_{2k-1} \end{matrix}\; \bigg|\; t\bigg),
\end{equation}
and the assertion of Theorem 1.3 is that
\begin{multline}
C\cdot{}_kF_{k-1}\bigg(\begin{matrix}\alpha_1 & \alpha_2 & \dots & \alpha_k \\ & \alpha_{k+1} & \dots & \alpha_{2k-1} \end{matrix}\; \bigg|\; t\bigg) = \\
\sum_{x\in({\mathbb F}_q^\times)^{2k-1}} \prod_{i=1}^k {\alpha}_i(x_i)\prod_{j=k+1}^{2k-1}\bar{\alpha}_j(x_j)\Psi\bigg(x_1 + \cdots + x_{2k-1} +
(-1)^kt\frac{x_1\cdots x_k}{x_{k+1}\cdots x_{2k-1}}\bigg).
\end{multline}

\end{document}